\documentclass[a4paper]{elsarticle}

\makeatletter
\def\ps@pprintTitle{%
\let\@oddhead\@empty
\let\@evenhead\@empty
\def\@oddfoot{}%
\let\@evenfoot\@oddfoot}
\makeatother

\usepackage[utf8]{inputenc}

\usepackage{amsmath,amsthm,amssymb}
\usepackage{geometry}
\usepackage{color}
\usepackage{hyperref}
\usepackage{comment}
\usepackage{mathdots}
\usepackage{mathtools}
\newtheorem{theorem}{Theorem}
\newtheorem{lemma}[theorem]{Lemma}
\newtheorem{remark}{Remark}

\newcommand{\CC}{\mathbb C}

\newcommand{\la}{\lambda}

\newcommand{\rank}{{\rm rank\,}}

\newcommand{\orb}{{\cal O}}
\newcommand{\codim}{{\rm codim\,}}
\newcommand{\maj}{{\prec_{\rm w}}}

%

\usepackage{tcolorbox}
\tcbuselibrary{skins}
\tcbuselibrary{breakable}
\tcbset{shield externalize}

\makeatletter
\def\namedlabel#1#2{\begingroup
    #2%
    \def\@currentlabel{#2}%
    \phantomsection\label{#1}\endgroup
}

\makeatletter
\def\namedlabel#1#2{\begingroup
    #2%
    \def\@currentlabel{#2}%
    \phantomsection\label{#1}\endgroup
}

\begin{document}

\title{On the dimension of orbits of matrix pencils \\ under strict equivalence}

\author[label1]{Fernando De Ter\'an}\author[label1]{Froil\'an M. Dopico}\author[label3]{Patryk Pagacz}

\address[label1]{Universidad Carlos III de Madrid, ROR: https://ror.org/03ths8210, Departamento de Matemáticas, Avenida de la Universidad, 30, 28911 Leganés (Madrid), Spain (fteran@math.uc3m.es, dopico@math.uc3m.es)}

\address[label3]{Jagiellonian University Krak\'{o}w, Prof. S. Lojasiewicza 6, Krak\'ow, 30-348, Krak\'{o}w, Poland (patryk.pagacz@gmail.com)}

\begin{abstract}
  We prove that, given two matrix pencils $L$ and $M$, if $M$ belongs to the closure of the orbit of $L$ under strict equivalence, then the dimension of the orbit of $M$ is smaller than or equal to the dimension of the orbit of $L$, and the equality is only attained when $M$ belongs to the orbit of $L$. Our proof uses only the majorization involving the eigenstructures of $L$ and $M$ which characterizes the inclusion relationship between orbit closures, together with the formula for the codimension of the orbit of a pencil in terms of its eigenstruture.
\end{abstract}

\begin{keyword} Matrix pencils\sep strict equivalence\sep orbit\sep closure\sep codimension\sep eigenstructure.



 \MSC 15A18, 15A21, 15A22, 15A54, 65F15.
\end{keyword}

\maketitle

\section{Introduction}

The {\em orbit} (under strict equivalence) of a matrix pencil $A+\la B$, with $A,B\in\CC^{m\times n}$, is the set
$$
\orb(A+\la B)=\{P(A+\la B)Q:\ P\in\CC^{m\times m}\quad\mbox{and}\quad Q\in\CC^{n\times n}\quad\mbox{are invertible}\}.
$$
In other words, it is the set of strictly equivalent pencils to $A+\la B$ (where two pencils are {\em strictly equivalent} if one of them can be obtained from the other after multiplying on the left and on the right by invertible matrices). Strict equivalence preserves the {\em eigenstructure} of matrix pencils and this eigenstructure is revealed by the canonical form under strict equivalence, namely the {\em Kronecker canonical form} (KCF) (see \cite[Ch. XII]{Gant59}). Therefore, the orbit of $A+\la B$ is the set of all $m\times n$ pencils with the same eigenstructure as $A+\la B$. This makes orbits relevant tools in the theory and applications of matrix pencils, since the eigenstructure is one of the most important pieces of information about matrix pencils in the applications where they arise.

Orbits under strict equivalence of pencils are differentiable manifolds, as they are orbits under the action of a group (namely, $GL_m(\CC)\times GL_n(\CC)$) on a set (the set of $m\times n$ matrix pencils, which can be identified with $\CC^{m\times n}\times\CC^{m\times n}$). Then, the {\em dimension} of an orbit is the dimension of the tangent space at any point of the orbit.  Formulas for the dimension of orbits were obtained in \cite{DeEd95} from an explicit description of this tangent space. They have been reformulated in \cite{ddkp}.

One interesting topic in the study of orbits of matrix pencils is the inclusion relationship between their closures. More precisely, to determine whether the orbit of a given pencil, $M$, is included in the closure of the orbit of another pencil, $L$ (the closure is considered in the Euclidean topology of $\CC^{m\times n}\times\CC^{m\times n}$, and denoted by $\overline\orb$). A characterization for this inclusion relation was presented in \cite{Pokr86} (and later reformulated in subsequent works, like \cite{Hoyo90}) in terms of majorizations between some lists which encode the eigenstructure of $L$ and $M$ (see Section \ref{basic_sec}).

When $\overline\orb(M)\subseteq\overline\orb(L)$, then $\orb(L)$ is expected to be ``larger" than $\orb(M)$ namely $\dim\orb(M)\leq\dim\orb(L)$. This is actually true and well-known
(see the {\em Closed orbit lemma} in \cite[p. 53]{Bore91}). However, no direct proof of this inequality is known. The main goal of this paper is to derive the inequality directly from the characterization of the inclusion relationship between orbit closures and the known formulas for their dimension.

\section{Basic notions and results}\label{basic_sec}

Matrix pencils (or {\em pencils} for short) will be denoted by capital letters, like $L$ and $M$ (we drop the reference to the variable $\la$ for brevity). We also use the notation $\overline\CC:=\CC\cup\{\infty\}$.

 The KCF is a direct sum (namely, a block diagonal pencil), uniquely determined up to permutation of the blocks, of pencils (the diagonal blocks) of the following types:
(1) {\em Jordan blocks} of size $k\times k$, with $k\geq1$, associated with either finite or infinite eigenvalues, denoted by $J_k(\mu)$, where $\mu\in\overline\CC$ is the eigenvalue; (2) {\em Right singular blocks} with size $k\times(k+1)$, with $k\geq0$, denoted by $L_k$; and (3) {\em Left singular blocks} with size $(k+1)\times k$, with $k\geq0$, denoted by $L_k^\top$ (since they are transposes of right singular blocks). The form of these pencils is not relevant in this note (we refer to \cite[Ch. XII]{Gant59} for the details).

Given two lists of  non-increasingly ordered integers ${\cal M}=(m_1,m_2,\hdots)$ and ${\cal N}=(n_1,n_2,\hdots)$, we say that $\cal N$ {\em weakly majorizes} $\cal M$, denoted ${\cal M}\maj\, {\cal N}$, if $m_1+\cdots+ m_j\leq n_1+\cdots+n_j$, for $j\geq1$.

The {\em Weyr characteristic} of a finite  list, ${\cal S}$, of non-negative integers is the ordered list $ W({\cal S})=(W_0({\cal S}),W_1({\cal S}),W_2({\cal S}),\hdots)$, where $W_i({\cal S})$ is the number of integers in $\cal S$ which are greater than or equal to $i$ (in particular, $W_0({\cal S})$ is the length of $\cal S$). If ${\cal S}$ only contains positive integers, then its Weyr characteristic is $ W({\cal S})=(W_1({\cal S}),W_2({\cal S}),\hdots)$. The Weyr characteristics of the sizes of the Jordan blocks associated with $\mu\in\overline\CC$, the sizes of the right singular blocks, and the sizes of the left singular blocks in the KCF of some pencil $L$ are denoted, respectively, by $W(\mu,L)=(W_1(\mu,L),W_2(\mu,L),\hdots),r(L)=(r_0(L),r_1(L),\hdots)$, and $\ell(L)=(\ell_0(L),\ell_1(L),\hdots)$. Note that $W(\mu , L), r(L)$, and $\ell (L)$ have a finite number of non-zero elements. We can consider them as finite lists or infinite lists with all their terms equal to zero from a certain index on.

 We highlight that, for every $m\times n$ matrix pencil $L$, the following relation holds
\begin{equation}\label{nrank}
    m-\ell_0(L)=n-r_0(L):=\rank (L),
\end{equation}
where $\rank(L)$ denotes the {\em rank} of $L$ (see, for instance, \cite[Ch. XII, \S3]{Gant59}).

The following two identities, which hold for every $m\times n$ matrix pencil $M$, will be used later:
\begin{eqnarray}
    m=\sum_{i\geq1}r_i(M)+\sum_{i\geq0}\ell_i(M)+\sum_{\la\in\overline\CC}\sum_{i\geq1}W_i(\la,M),\label{m+p}\\
    n=\sum_{i\geq0}r_i(M)+\sum_{i\geq1}\ell_i(M)+\sum_{\la\in\overline\CC}\sum_{i\geq1}W_i(\la,M).\label{n+q}
\end{eqnarray}

A formula for the codimension of the orbit of an $m\times n$ pencil $L$ was obtained in \cite[Th. 2.2]{DeEd95}, in terms of the sizes of the blocks in its KCF. Recently, the following formula has been presented in \cite{ddkp} in terms of the Weyr characteristics:
\begin{equation}\label{codimorbit}
\codim\orb(L)=\displaystyle \ell_0(L)n+r_0(L)m-\displaystyle\sum_{i\geq0}{r_i(L)r_{i+1}(L)}-\sum_{i\geq0}{\ell_i(L)\ell_{i+1}(L)}+\sum_{\la\in\overline\CC}\sum_{i\geq1}{W_i(\lambda,L)^2}.
\end{equation}
The last term in \eqref{codimorbit} can be replaced with $\sum_{k=1}^p\sum_{i\geq1}{W_i(\lambda_k,L)^2}$, where $\la_1,\hdots,\la_p \in \overline\CC$ are the distinct eigenvalues of $L$, since $W_i(\la,L)=0$, for all $i\geq1$, if $\la$ is not an eigenvalue of $L$.

It is well-known (see the {\em Closed orbit lemma} in \cite[p. 53]{Bore91}) that, if $\overline\orb(M)\subseteq\overline\orb(L)$, then  $\dim {\cal O}(M)\leq \dim {\cal O}(L)$ (or, equivalently, $\codim\orb(L)\leq\codim\orb(M)$). The proof of this fact in \cite{Bore91} uses tools and basic (but sound) results from the theory of algebraic groups. However, it is possible to get it without the use of such machinery by means of inequalities of lists of integers. In particular using \eqref{codimorbit} and the following characterization for the inclusion $\overline\orb(M)\subseteq\overline\orb(L)$ (or, equivalently,
 $M\in\overline{\cal O}(L)$) in \cite[Th. 3]{Pokr86}. Here, we present the reformulation in \cite[Lemma 1.3]{Hoyo90}.

\begin{theorem}\label{dehoyos_th}
If $L$ and $M$ are matrix pencils of the same size and $h:=\rank L-\rank M\geq0$, then $M\in\overline\orb(L)$ if and only if the following three majorizations hold: {\rm (1)} $r(M) \prec_{ \rm w} r(L)+(h,h,\ldots)$; {\rm (2)}  $\ell(M) \prec_{ \rm w} \ell(L)+(h,h,\ldots)$; and {\rm (3)} $W(\lambda,L)\prec_{\rm w} W(\lambda,M)+(h,h,\ldots),$ for all $\lambda\in \overline{\CC}$.
\end{theorem}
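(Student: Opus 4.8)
This is the classical Pokrzywa--de~Hoyos characterization of orbit-closure inclusion (\cite{Pokr86,Hoyo90}), and a proof from scratch has two halves of quite different character; the plan is to treat necessity first and sufficiency second.

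\emph{Necessity.} Suppose $M\in\overline\orb(L)$ (so $h\ge0$ automatically, as rank is lower semicontinuous). The plan is to write each relevant eigenstructure quantity as an affine function of a rank: for a pencil with coefficient matrices $A,B$, suitable partial sums of $r(\cdot)$, of $\ell(\cdot)$, and of $W(\mu,\cdot)$ for $\mu\in\overline\CC$ equal $\alpha\pm\rank\Phi(A,B)$, where $\alpha$ is an integer depending only on $m,n$ and the index of the partial sum, and $\Phi$ is a block-banded matrix depending polynomially on $(A,B)$ --- essentially the matrices whose ranks the staircase reduction of the KCF computes, built from $A,B$ for the singular parts and from $A+\mu B$ (with the usual $A\leftrightarrow B$ swap for $\mu=\infty$) for the Jordan part at $\mu$. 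Taking $L_j\in\orb(L)$ with $L_j\to M$, every such rank is constant along the sequence, hence by lower semicontinuity of rank does not increase in the limit; subtracting the appropriate constants $\alpha$ --- which for $L$ and for $M$ differ by multiples of $h$ through \eqref{nrank} --- turns these rank inequalities into exactly the three $\maj$ relations, the term $(h,h,\dots)$ being precisely this constant correction. The remaining work is the (mechanical) bookkeeping for the three families of invariants.

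\emph{Sufficiency.} Suppose the three majorizations hold; we must exhibit $M\in\overline\orb(L)$. The plan uses three facts. (a) Orbit-closure containment is transitive: if $N\in\overline\orb(P)$ then $\orb(N)\subseteq\overline\orb(P)$ --- the latter being closed and $GL_m(\CC)\times GL_n(\CC)$-invariant --- so $\overline\orb(N)\subseteq\overline\orb(P)$. (b) ``$M'\preceq L'$ iff $M'$ and $L'$ satisfy (1)--(3)'' is a partial order on eigenstructures of $m\times n$ pencils: reflexivity and transitivity are immediate, and antisymmetry holds because $M'\preceq L'$ and $L'\preceq M'$ force $h=0$, hence mutual plain weak majorization of each list, hence equality of all partial sums, hence identical lists and identical KCF. (c) Restricting to the finitely many eigenstructures of $m\times n$ pencils whose eigenvalues lie in a fixed finite set $S$ containing the eigenvalues of $L$ and of $M$ together with $\infty$ --- and noting that conditions (1)--(3) are vacuous for $\la\notin S$, so $M\preceq L$ still holds within this finite set --- one obtains a chain $M=N_0\lessdot N_1\lessdot\cdots\lessdot N_s=L$ of covering relations, and by (a) it suffices to prove $N_i\in\overline\orb(N_{i+1})$ for each $i$. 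These covering relations are, as determined in \cite{Pokr86,Hoyo90}, a short explicit list of elementary degenerations of the KCF, each altering only one or two blocks --- e.g.\ $J_a(\mu)\oplus J_b(\mu)\rightsquigarrow J_{a+1}(\mu)\oplus J_{b-1}(\mu)$ with $a\ge b\ge1$; trading one unit of length between a singular block and a Jordan block, such as $L_k\oplus J_a(\mu)\rightsquigarrow L_{k+1}\oplus J_{a-1}(\mu)$ and its transpose; creating or enlarging a pair $L_j\oplus L_k^\top$ at the expense of a Jordan block or of two other singular blocks; and the moves internal to the $r$-list and to the $\ell$-list. For each such degeneration one writes down an explicit one-parameter family $N(t)\in\orb(N_{i+1})$, $t\in(0,1]$, that perturbs only the one or two blocks involved and leaves the rest of the KCF fixed, and verifies $\lim_{t\to0}N(t)$ strictly equivalent to $N_i$; the block-diagonal limit gives $N_i\in\overline\orb(N_{i+1})$, and composing along the chain gives $M\in\overline\orb(L)$.

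\emph{Main obstacle.} The crux is the last step of the sufficiency direction: obtaining the complete, non-redundant list of covering relations of $\preceq$ (checking exhaustiveness, and that no listed covering factors through an intermediate eigenstructure) and constructing the matching explicit perturbation for each --- this is the heart of \cite{Pokr86,Hoyo90}. By comparison, the necessity direction together with facts (a)--(c) of sufficiency are routine once the rank descriptions of the invariants, respectively the finiteness-of-intervals observation, are in hand.
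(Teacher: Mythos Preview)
The paper does not prove Theorem~\ref{dehoyos_th}: it is quoted as a known result from \cite{Pokr86} (reformulated in \cite{Hoyo90}) and used as a black box in the proof of the paper's main Theorem~\ref{dim_th}. There is therefore no ``paper's own proof'' to compare your proposal against.

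That said, your outline is a faithful high-level summary of how the proofs in \cite{Pokr86,Hoyo90} actually go, and you correctly flag the substantive part (the exhaustive list of covering relations and the explicit one-parameter deformations realizing each) as the crux. Two small comments. First, the directions in your informal list of covers are not all consistent: your examples $J_a\oplus J_b\rightsquigarrow J_{a+1}\oplus J_{b-1}$ and $L_k\oplus J_a(\mu)\rightsquigarrow L_{k+1}\oplus J_{a-1}(\mu)$ go \emph{upward} in the order (from the $M$-side to the $L$-side, matching rules~3--5 of Theorem~\ref{rules_th}), whereas ``creating or enlarging a pair $L_j\oplus L_k^\top$ at the expense of a Jordan block'' describes the \emph{downward} degeneration; the upward cover is rule~6, $L_p\oplus L_q^\top\rightsquigarrow\bigoplus_i J_{n_i}(\mu_i)$ with $\sum_i n_i=p+q+1$. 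Second, as you yourself note, what you have written is a plan rather than a proof: the rank formulae underlying necessity and the exhaustiveness/irreducibility of the cover list underlying sufficiency are asserted but not carried out, and those are precisely the pages of work in \cite{Pokr86,Hoyo90} (see also Theorem~\ref{rules_th} and \cite{Bole98,sergeichuk-laa2021} for the cover list in the form the present paper uses).
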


We will need the following two lemmas on inequalities of non-negative integer lists.

\begin{lemma}\label{disdeltas_lem}
    If $k\geq1$, and $d_1,\hdots,d_k$ and $\delta_1,\hdots,\delta_k$ are integers such that
    {\rm(a)} $\delta_1\geq\delta_2\geq\cdots\geq\delta_k\geq0$, and {\rm(b)} $d_1+\cdots+d_j\geq0$, for all $j=1,\hdots,k$, then
    \begin{equation}\label{sum}
         d_1\delta_1+\cdots +d_k\delta_k\geq0.
    \end{equation}
     If 
     the equality holds in \eqref{sum}, then $d_1+\cdots+d_j=0$ or $\delta_j=\delta_{j+1}$, for $1\leq j\leq k$ (with $\delta_{k+1}:=0$).
\end{lemma}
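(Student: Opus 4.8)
The plan is to use Abel summation (summation by parts) to rewrite the left-hand side of \eqref{sum} as a manifestly non-negative combination. Set $S_0:=0$ and $S_j:=d_1+\cdots+d_j$ for $1\le j\le k$, so that hypothesis (b) reads $S_j\ge 0$ for all $j$, and $d_j=S_j-S_{j-1}$. Substituting this and reindexing one of the two resulting sums, I would obtain
\begin{equation*}
 d_1\delta_1+\cdots+d_k\delta_k=\sum_{j=1}^k(S_j-S_{j-1})\delta_j=S_k\delta_k+\sum_{j=1}^{k-1}S_j(\delta_j-\delta_{j+1}),
\end{equation*}
where the term $S_0\delta_1$ vanishes because $S_0=0$. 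With the convention $\delta_{k+1}:=0$ this can be written uniformly as $\sum_{j=1}^k S_j(\delta_j-\delta_{j+1})$.

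Now each summand is non-negative: $S_j\ge 0$ by (b), while $\delta_j-\delta_{j+1}\ge 0$ for $1\le j\le k-1$ by the monotonicity in (a), and $\delta_k-\delta_{k+1}=\delta_k\ge 0$ again by (a). Hence the whole sum is $\ge 0$, which proves \eqref{sum}.

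For the equality case, note that a sum of non-negative terms vanishes if and only if every term vanishes, so equality in \eqref{sum} forces $S_j(\delta_j-\delta_{j+1})=0$ for each $j=1,\dots,k$; that is, $d_1+\cdots+d_j=S_j=0$ or $\delta_j=\delta_{j+1}$ for each such $j$, which is precisely the asserted conclusion (the case $j=k$ using $\delta_{k+1}=0$). Conversely, if for every $j$ one of these two equalities holds, then every term of $\sum_j S_j(\delta_j-\delta_{j+1})$ is zero and equality holds in \eqref{sum}, so the characterization is in fact an equivalence. I do not anticipate a genuine obstacle here; the only points requiring mild care are the bookkeeping in the reindexing step and the treatment of the boundary index $j=k$ via the convention $\delta_{k+1}:=0$.
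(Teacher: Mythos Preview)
Your proof is correct and is essentially the same as the paper's: both arguments amount to the Abel decomposition $\sum_j d_j\delta_j=\sum_{j=1}^k S_j(\delta_j-\delta_{j+1})$ with $S_j=d_1+\cdots+d_j$, the paper presenting this as a telescoping chain of inequalities rather than an explicit summation-by-parts identity. The equality analysis is likewise identical.
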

\begin{proof} 
Using recursively conditions (a) and (b) in the statement, we get the following inequalities:
\begin{equation}\label{sum2}
\begin{array}{ccc}
 d_1\delta_1+d_2\delta_2+\cdots +d_k\delta_k&\geq&
 (d_1+d_2)\delta_2+d_3\delta_3+\cdots+d_k\delta_k\\&\geq &(d_1+d_2+d_3)\delta_3+\cdots+d_k\delta_k\\
 &\vdots&\vdots\\&\geq&(d_1+d_2+\cdots+d_k)\delta_k\geq0.
\end{array}
\end{equation}
For the claim about the equality, note that the equality in \eqref{sum} implies that all the inequalities in \eqref{sum2} become equalities. Then, subtracting in each of these equalities the right-hand side from the left one yields $(d_1+d_2+\dots+d_{j})(\delta_{j}-\delta_{j+1})= 0$, for all $j=1,2,\dots,k$.
\end{proof}

\begin{lemma}\label{main_lem}
Let $p\geq0$ and let $\alpha_1,\hdots,\alpha_k$ and $\beta_1,\hdots,\beta_k$ be integers such that
{\rm(i)} $0\leq\alpha_{i+1}\leq\alpha_{i}\leq p$ and $0\leq\beta_{i+1}\leq\beta_{i}\leq p$, for all $i=1,\hdots,k-1$, and {\rm(ii)} $\alpha_1+\cdots+\alpha_j\leq\beta_1+\cdots+\beta_j$, for all $j=1,\hdots,k$. Then
\begin{equation}\label{sumsquares}
    \sum_{i=1}^k\alpha_i^2\leq\sum_{i=1}^k\beta_i^2, \quad \mbox{and}
\end{equation}
    \begin{equation}\label{ineqproducts}
        p\alpha_1+\sum_{i=1}^{k-1}\alpha_i\alpha_{i+1}\leq p\beta_1+\sum_{i=1}^{k-1}\beta_i\beta_{i+1}.
    \end{equation}
Moreover, the equality is attained in \eqref{sumsquares} or \eqref{ineqproducts} if and only if $\alpha_i=\beta_i$, for all $i=1,\hdots,k$.
\end{lemma}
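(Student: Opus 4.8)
The plan is to prove both inequalities together by reducing them, via an Abel-type summation, to Lemma~\ref{disdeltas_lem}. The common feature of \eqref{sumsquares} and \eqref{ineqproducts} is that each side is a quadratic expression in a non-increasing, bounded sequence, and the two sequences are related by the weak majorization in hypothesis~(ii). I would first rewrite each target inequality as $\sum_i (\beta_i - \alpha_i)\,\delta_i \geq 0$ for a suitable auxiliary sequence $\delta_i$ built from the $\alpha$'s and $\beta$'s, and then check that $\delta_i$ is non-increasing and non-negative so that Lemma~\ref{disdeltas_lem} applies with $d_i := \beta_i - \alpha_i$ (condition~(b) of that lemma is exactly hypothesis~(ii) here).

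For \eqref{sumsquares}: write $\beta_i^2 - \alpha_i^2 = (\beta_i - \alpha_i)(\beta_i + \alpha_i)$. Here $\delta_i := \beta_i + \alpha_i$ is non-negative and, by~(i), non-increasing; hence Lemma~\ref{disdeltas_lem} gives $\sum_i(\beta_i-\alpha_i)(\beta_i+\alpha_i) \geq 0$, which is \eqref{sumsquares}. For \eqref{ineqproducts}: set $f(\gamma) := p\gamma_1 + \sum_{i=1}^{k-1}\gamma_i\gamma_{i+1}$ for a non-increasing sequence $\gamma$, extended by $\gamma_0 := p$ and $\gamma_{k+1} := 0$, so that $f(\gamma) = \sum_{i=1}^{k}\gamma_{i-1}\gamma_i = \sum_{i=1}^k \gamma_i(\gamma_{i-1}+\gamma_{i+1})/1$ — more usefully, I would compute $f(\beta) - f(\alpha)$ directly. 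The cleanest route is to telescope: with the boundary conventions, $f(\gamma) = \sum_{i=1}^{k} \gamma_i \gamma_{i-1}$, and a short manipulation shows $f(\beta) - f(\alpha) = \sum_{i=1}^{k} (\beta_i - \alpha_i)(\beta_{i-1} + \alpha_{i+1})$ after reindexing one of the sums. Then $\delta_i := \beta_{i-1} + \alpha_{i+1}$ is again non-negative, and monotonicity $\delta_i \geq \delta_{i+1}$ follows from~(i) since both $\beta_{i-1}\geq\beta_i$ and $\alpha_{i+1}\geq\alpha_{i+2}$; Lemma~\ref{disdeltas_lem} then yields \eqref{ineqproducts}. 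I would double-check the exact form of the cross terms by expanding for small $k$ (say $k=2,3$) before committing to the identity.

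For the equality characterization, I would argue in the nontrivial direction: suppose equality holds in \eqref{sumsquares} (the argument for \eqref{ineqproducts} is analogous, using the other $\delta$). By the equality clause of Lemma~\ref{disdeltas_lem}, for each $j$ either $\sum_{i\le j}(\beta_i - \alpha_i) = 0$ or $\delta_j = \delta_{j+1}$, i.e. $\beta_j + \alpha_j = \beta_{j+1} + \alpha_{j+1}$ (with $\beta_{k+1}=\alpha_{k+1}=0$). I would then run a downward induction on $j$ from $k$ to $1$: the partial sums $S_j := \sum_{i\le j}(\beta_i-\alpha_i)$ are $\geq 0$ by~(ii) and $S_0 = 0$; if $S_j > 0$ for some $j$ then $\delta_j = \delta_{j+1}$, and combined with the already-established equalities $\beta_i = \alpha_i$ for $i > j$ this forces $\beta_j = \alpha_j$ — contradicting, after summing, that $S_j \neq S_{j-1}$ in a way compatible with all constraints — so in fact $S_j = 0$ for all $j$, whence $\beta_j = \alpha_j$ termwise. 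I would streamline this into a clean induction rather than leaving it as a case split.

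The main obstacle I anticipate is not any single hard estimate but getting the Abel-summation identity for \eqref{ineqproducts} exactly right, including the boundary terms at $i=0$ and $i=k+1$: the expression $p\alpha_1 + \sum \alpha_i\alpha_{i+1}$ is slightly asymmetric, and a careless reindexing will produce $\delta_i = \beta_{i+1} + \alpha_{i-1}$ or some variant whose monotonicity fails. Pinning down the correct auxiliary sequence (so that it is provably non-increasing under hypothesis~(i)) is the crux; once that is in place, both inequalities and the sharp equality condition fall out uniformly from Lemma~\ref{disdeltas_lem}.
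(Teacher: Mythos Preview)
Your overall strategy---rewrite each difference as $\sum_i(\beta_i-\alpha_i)\delta_i$ and invoke Lemma~\ref{disdeltas_lem}---is exactly the paper's. For \eqref{sumsquares} your choice $\delta_i=\alpha_i+\beta_i$ is identical to the paper's. For \eqref{ineqproducts} your single-step identity
\[
f(\beta)-f(\alpha)=\sum_{i=1}^k(\beta_i-\alpha_i)(\beta_{i-1}+\alpha_{i+1}),\qquad \beta_0:=p,\ \alpha_{k+1}:=0,
\]
is correct (the cross terms telescope), and $\delta_i=\beta_{i-1}+\alpha_{i+1}$ is indeed non-increasing and non-negative. This is actually more economical than the paper, which splits \eqref{ineqproducts} into two applications of Lemma~\ref{disdeltas_lem}, first with $\delta_i=\alpha_{i-1}$ (yielding $p\alpha_1+\sum\alpha_i\alpha_{i+1}\le p\beta_1+\sum\alpha_i\beta_{i+1}$) and then with $\delta_i=\beta_{i+1}$.

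The equality argument, however, does not close as you describe it. In your downward induction for \eqref{sumsquares}, the inductive hypothesis ``$\beta_i=\alpha_i$ for $i>j$'' cannot be launched: at $j=k$ the equality clause gives ``$S_k=0$ or $\alpha_k=\beta_k=0$'', and the first alternative does not yield $\alpha_k=\beta_k$. More generally, in the inductive step the case $S_j=0$ only gives $\beta_j-\alpha_j=S_j-S_{j-1}=-S_{j-1}\le 0$, not equality. The paper runs the argument \emph{upward}: one first shows some $S_{j_0}=0$ (else $\alpha_j=\alpha_{j+1}$ and $\beta_j=\beta_{j+1}$ for all $j$, forcing all terms to vanish), takes $j_0$ minimal, and observes that $j_0>1$ would give $\alpha_1=\cdots=\alpha_{j_0}$, $\beta_1=\cdots=\beta_{j_0}$, hence $S_1=0$, a contradiction; thus $\alpha_1=\beta_1$, and one recurses on the tail. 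For \eqref{ineqproducts} the situation is not ``analogous'' with your $\delta_i=\beta_{i-1}+\alpha_{i+1}$: the condition $\delta_j=\delta_{j+1}$ now reads $\beta_{j-1}=\beta_j$ and $\alpha_{j+1}=\alpha_{j+2}$, with shifted indices on the two sequences, so the same induction pattern does not transfer verbatim. Here the paper's two-step proof pays off: equality in the first step (with $\delta_i=\alpha_{i-1}$, $\alpha_0=p$) gives ``$S_j=0$ or $\alpha_{j-1}=\alpha_j$'', and at $j=1$ the second alternative says $\alpha_1=p\ge\beta_1$, forcing $S_1=0$ immediately; one then inducts forward as before.
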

\begin{proof}
Note that \eqref{sumsquares} is equivalent to $\sum_{i=1}^k(\beta_i^2-\alpha_i^2)\geq0$, which is in turn equivalent to $\sum_{i=1}^k(\beta_i-\alpha_i)(\beta_i+\alpha_i)\geq0$, which follows from Lemma \ref{disdeltas_lem} for $d_i=\beta_i-\alpha_i$ and $\delta_i=\beta_i+\alpha_i$.

Next, let us assume that the equality in \eqref{sumsquares} holds. By Lemma \ref{disdeltas_lem}, for each $j=1,2,\dots,k$, $\alpha_j + \beta_j =\alpha_{j+1} + \beta_{j+1}$ (where $\alpha_{k+1}=\beta_{k+1}=0$) or $\sum_{i=1}^j (\beta_i - \alpha_i) = 0$. In addition, by the assumption (i), the equality $\alpha_{j}+\beta_j=\alpha_{j+1}+\beta_{j+1}$ is equivalent to $\alpha_j=\alpha_{j+1}$ and $\beta_{j}=\beta_{j+1}$. If $\sum_{i=1}^j (\beta_i - \alpha_i)>0$ for all $j=1,2,\dots,k$, then $\alpha_{j}+\beta_j=\alpha_{j+1}+\beta_{j+1}$ for $j=1,2,\dots,k$, which implies that $\alpha_j = \beta _j =0$ for $j=1,2,\dots,k$ and leads to a contradiction. Then, $\sum_{i=1}^{j} (\beta_i - \alpha_i) = 0$ for at least one $1\leq j\leq k$. Let $j_0 = \min \{j \, : \, \sum_{i=1}^{j} (\beta_i - \alpha_i) = 0 \}$ and note that $j_0 = 1$, because otherwise $\alpha_1 = \cdots = \alpha_{j_0}$ and $\beta_1 = \cdots = \beta_{j_0}$, which implies $j_0 (\beta_1 - \alpha_1) = \sum_{i=1}^{j_0} (\beta_i - \alpha_i) = 0$, namely $\beta_1 - \alpha_1 =0$, which is a contradiction. Thus $\beta_1 = \alpha_1$. Then, the sequences $\alpha_2 \geq \cdots \geq \alpha_k \geq 0$ and $\beta_2 \geq \cdots \geq \beta_k \geq 0$ satisfy the assumptions of the statement and the equality $\sum_{i=2}^k\alpha_i^2=\sum_{i=2}^k\beta_i^2$, so the argument above applied to these sequences yields  $\alpha_2=\beta_2$. Continuing in this way we get $\alpha_i=\beta_i$, for $i=1,2,\dots,k$.

Now let us prove \eqref{ineqproducts}.
Lemma \ref{disdeltas_lem} applied to $d_i=\beta_i-\alpha_i$ and $\delta_{i}=\alpha_{i-1}$, for $i=1,2,\dots,k$, with $\alpha_0:=p$, gives us $p(\beta_1-\alpha_1)+\sum_{i=1}^{k-1}\alpha_i(\beta_{i+1}-\alpha_{i+1})\geq 0$, which is equivalent to \begin{equation}\label{part_ineq1}
    p\alpha_1+\sum_{i=1}^{k-1}\alpha_i\alpha_{i+1}\leq p\beta_1+\sum_{i=1}^{k-1}\alpha_i\beta_{i+1}.
\end{equation}
Similarly, Lemma \ref{disdeltas_lem} with $d_i=\beta_i-\alpha_i$ and $\delta_i=\beta_{i+1}$, for $i=1,2,\dots,k-1$, yields
 $ p\beta_1+\sum_{i=1}^{k-1}\alpha_i\beta_{i+1}\leq p\beta_1+\sum_{i=1}^{k-1}\beta_i\beta_{i+1},$
which combined with \eqref{part_ineq1} gives \eqref{ineqproducts}.

Finally, if we assume that the equality is attained in \eqref{ineqproducts} then it is also attained in \eqref{part_ineq1}. Lemma \ref{disdeltas_lem} implies that, for each $j=1,2,\dots,k$,  $\alpha_1+\dots+\alpha_j=\beta_1+\dots+\beta_j$ or $\alpha_{j-1}=\alpha_{j}$, where $\alpha_0:=p$. If $\alpha_1<\beta_1$, then $\beta_1\leq p=\alpha_1$, which is a contradiction. Thus $\alpha_1 = \beta_1$. If there exists $\ell >1$ such that $\alpha_j=\beta_j$ and $\alpha_\ell<\beta_\ell$, for  $j=1,\dots,\ell-1<k$, then $\alpha_\ell=\alpha_{\ell-1}=\beta_{\ell-1}\geq \beta_\ell$, which is again a contradiction. Then, it must be $\alpha_i=\beta_i$, for $i=1,2,\dots,k$.
\end{proof}

\section{The main result}\label{main_sec}

We want to prove the following result.

\begin{theorem}\label{dim_th}
    Let $L$ and $M$ be two $m\times n$ matrix pencils with $\rank M\leq\rank L$, and set $h:=\rank L-\rank M$. If the following three majorization relationships are satisfied
\begin{itemize}
     \item[]\namedlabel{m1}{\rm(M1)} $r(M)\maj\, r(L)+(h,h,\hdots)$,
    \item[]\namedlabel{m2}{\rm(M2)} $\ell(M)\maj\,\ell(L)+(h,h,\hdots)$, and  
    \item[]\namedlabel{m3}{\rm(M3)} $W(\mu,L)\maj\, W(\mu,M)+(h,h,\hdots)$, for all $\mu\in\overline\CC$,
\end{itemize}
    then
    \begin{equation}\label{codim_ineq}
    \codim \orb(L)\leq\codim\orb(M),
\end{equation}
    where $\codim\orb(N)$ denotes the codimension of the orbit of $N$, given by \eqref{codimorbit}. Moreover, the equality holds in \eqref{codim_ineq} if and only if $h=0$ and ``$\maj$" is replaced by ``$=$" in \ref{m1}--\ref{m3}.
\end{theorem}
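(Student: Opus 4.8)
The plan is to turn the claimed inequality into a comparison of the three Weyr–characteristic sums occurring in \eqref{codimorbit}, and then feed each of the majorizations \ref{m1}--\ref{m3} into Lemma \ref{main_lem}.

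\emph{Reduction.} By \eqref{nrank} we have $\ell_0(M)=\ell_0(L)+h$ and $r_0(M)=r_0(L)+h$. Writing $\codim\orb(N)$ from \eqref{codimorbit} for $N=L$ and $N=M$ and subtracting, the quadratic part in $\ell_0$ and $r_0$ contributes $\big(\ell_0(M)-\ell_0(L)\big)n+\big(r_0(M)-r_0(L)\big)m=h(m+n)$, so
$$
\codim\orb(M)-\codim\orb(L)=h(m+n)-\Delta_r-\Delta_\ell+\Delta_W,
$$
where $\Delta_r=\sum_{i\ge0}r_i(M)r_{i+1}(M)-\sum_{i\ge0}r_i(L)r_{i+1}(L)$, $\Delta_\ell$ is the analogous difference for the $\ell$'s, and $\Delta_W=\sum_{\la\in\overline\CC}\sum_{i\ge1}W_i(\la,M)^2-\sum_{\la\in\overline\CC}\sum_{i\ge1}W_i(\la,L)^2$. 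Thus \eqref{codim_ineq} is equivalent to $\Delta_r+\Delta_\ell-\Delta_W\le h(m+n)$.

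\emph{Bounding the three pieces with Lemma \ref{main_lem}.} Here is the crux. For the $r$–part, apply \eqref{ineqproducts} with $\alpha_i=r_{i-1}(M)$, $\beta_i=r_{i-1}(L)+h$ and $p=r_0(M)=r_0(L)+h$, truncating both lists at the length of $r(M)$; since $\alpha_1=\beta_1=r_0(M)=p$, the terms $p\alpha_1$ and $p\beta_1$ cancel and one is left with $\sum_{i\ge0}r_i(M)r_{i+1}(M)\le\sum_j\big(r_j(L)+h\big)\big(r_{j+1}(L)+h\big)$, the sum on the right running over the appropriate finite range. Do the same for $\ell$ using \ref{m2}, and for each $\la$ use \eqref{sumsquares} with $\alpha_i=W_i(\la,L)$, $\beta_i=W_i(\la,M)+h$ (truncated at the length of $W(\la,L)$), giving $\sum_{i\ge1}W_i(\la,L)^2\le\sum_i\big(W_i(\la,M)+h\big)^2$. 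Now expand every binomial: the zeroth–order terms recover $\sum r_i(L)r_{i+1}(L)$, $\sum\ell_i(L)\ell_{i+1}(L)$ and $\sum_{\la,i}W_i(\la,M)^2$; the terms linear in $h$ are sums of $r_i(L),\ \ell_i(L),\ W_i(\la,M)$, which by \eqref{m+p} and \eqref{n+q} reassemble precisely into the distributed $h(m+n)$; and the remaining terms turn out to be non-positive. Collecting these contributions gives $\Delta_r+\Delta_\ell-\Delta_W\le h(m+n)$, i.e. \eqref{codim_ineq}.

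\emph{Equality.} If equality holds in \eqref{codim_ineq}, then all inequalities used above are equalities, in particular the three invocations of Lemma \ref{main_lem}. By the "moreover" part of Lemma \ref{main_lem} this forces the truncated lists $\big(r_i(M)\big)$ and $\big(r_i(L)+h\big)$ to agree term by term, and likewise for $\ell$ and for each $W(\la,\cdot)$; together with the forced vanishing of the residual terms this yields $h=0$ and that \ref{m1}--\ref{m3} hold with "$\maj$" replaced by "$=$". Conversely, if $h=0$ and the three lists of each pair coincide, then $\Delta_r=\Delta_\ell=\Delta_W=0$ and \eqref{codim_ineq} is an equality.

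The step I expect to be the main obstacle is the bookkeeping in the middle paragraph: because $r(L)+(h,h,\dots)$ and $\ell(L)+(h,h,\dots)$ have non-vanishing tails, Lemma \ref{main_lem} must be applied to truncations at exactly the right indices, and one must verify that the $h$– and $h^2$–order corrections produced by expanding $\big(r_j(L)+h\big)\big(r_{j+1}(L)+h\big)$, $\big(\ell_j(L)+h\big)\big(\ell_{j+1}(L)+h\big)$ and $\big(W_i(\la,M)+h\big)^2$ combine \emph{exactly} with $h(m+n)$ (via \eqref{nrank}, \eqref{m+p}, \eqref{n+q}) rather than merely being dominated by it — exactness being needed both for the inequality and, crucially, for the equality analysis to close.
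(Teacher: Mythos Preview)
Your reduction and the $h=0$ case are fine and match the paper's argument. The gap is in the $h>0$ step: the two assertions ``the terms linear in $h$ \dots\ reassemble precisely into the distributed $h(m+n)$'' and ``the remaining terms turn out to be non-positive'' are both false in general, and this is fatal rather than just bookkeeping. First, the linear contributions you produce are $h$ times sums of $r_i(L)$, $\ell_i(L)$ and $W_i(\la,M)$, a mixture of invariants of $L$ and of $M$; identities \eqref{m+p}--\eqref{n+q} express $m+n$ using the invariants of a \emph{single} pencil, so no clean recombination to $h(m+n)$ is available. Second, the $h^2$--order terms are \emph{non-negative}: each truncation contributes $h^2$ times its length, so the residual is $h^2\big((K_r-1)+(K_\ell-1)+\sum_\la K'_\la\big)$ minus only the (possibly zero) tail $\sum_\la\sum_{i>K'_\la}W_i(\la,M)^2$.

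A concrete failure: take $h\ge 3$, let $M$ be the $h\times h$ zero pencil and $L=\bigoplus_{i=1}^h J_1(\mu_i)$ with distinct $\mu_i$. Then $r(M)=(h),\ \ell(M)=(h)$, all $W(\la,M)=0$, and $W(\mu_i,L)=(1)$. Your $r$-- and $\ell$--bounds give $\Delta_r,\Delta_\ell\le 0$ (the product sums vanish), while for each $\mu_i$ your $W$--bound reads $1=W_1(\mu_i,L)^2\le(W_1(\mu_i,M)+h)^2=h^2$, so summing over the $h$ eigenvalues gives $-\Delta_W\le h^3$. Hence your argument yields $\Delta_r+\Delta_\ell-\Delta_W\le h^3$, but $h(m+n)=2h^2<h^3$: the chain of inequalities does not close. (The true value is $\Delta_r+\Delta_\ell-\Delta_W=h$, so the theorem of course holds here; it is the method that breaks.)

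The paper circumvents this obstruction entirely: it uses Lemma~\ref{main_lem} only for $h=0$, and for $h>0$ it invokes Theorem~\ref{rules_th} to factor the passage from $M$ to $L$ into elementary moves. Moves 1--5 preserve rank, so the $h=0$ argument handles them; only move~6 changes the rank, always by exactly $1$, and there a direct computation (with no shifted Lemma~\ref{main_lem}) shows the codimension drops by at least $1$. This factorization is the missing ingredient in your approach.
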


A key tool in our proof is a characterization of the inclusion relationship between orbit closures given in Theorem \ref{dehoyos_th} (namely, a characterization of \ref{m1}--\ref{m3} in Theorem \ref{dim_th}) by means of six elementary transformations. This result was derived in \cite{Bole98} following \cite{Pokr86}. See also \cite{sergeichuk-laa2021} for a complete modern treatment. Here we follow \cite[Th. 2.2]{DmDo18}, where the notation $A\rightsquigarrow B$ means that the pencil $A$ is replaced by the pencil $B$ and $J_0 (\mu)$ is the empty matrix.

\begin{theorem}\label{rules_th}
    Let $L$ and $M$ be two matrix pencils as in the statement of Theorem {\rm\ref{dim_th}}. Then \ref{m1}--\ref{m3} in Theorem {\rm\ref{dim_th}} hold if and only if the KCF of $L$ can be obtained from the one of $M$ after a sequence of changes, where each change can be of the following six types:
    \begin{enumerate}
        \item\label{rule1} $L_{j-1}\oplus L_{k+1}\rightsquigarrow L_j\oplus L_k$, with $1\leq j\leq k$.
        \item $L_{j-1}^\top\oplus L_{k+1}^\top\rightsquigarrow L_j^\top\oplus L_k^\top$, with $1\leq j\leq k$.
        \item $L_j\oplus J_{k+1}(\mu)\rightsquigarrow L_{j+1}\oplus J_k(\mu)$, with $j,k\geq0$ and $\mu\in\overline\CC$.
        \item $L_j^\top\oplus J_{k+1}(\mu)\rightsquigarrow L_{j+1}^\top\oplus J_k(\mu)$, with $j,k\geq0$ and $\mu\in\overline\CC$.
        \item\label{rule5} $J_j(\mu)\oplus J_k(\mu)\rightsquigarrow J_{j-1}(\mu)\oplus J_{k+1}(\mu)$, with $1\leq j\leq k$ and $\mu\in\overline\CC$.
        \item $L_p\oplus L^\top_q\rightsquigarrow \bigoplus_{i=1}^s J_{n_i}(\mu_i)$, with $p+q+1=\sum_{i=1}^sn_i$,  $\mu_i\in\overline\CC$, and $\mu_i\neq\mu_{i'}$ for $i\neq i'$.
    \end{enumerate}
\end{theorem}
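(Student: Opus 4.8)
I would establish the two implications separately, since they have quite different characters.

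\emph{The implication ``$\Leftarrow$''.} The relation ``$M\in\overline\orb(L)$'' is transitive (an orbit closure is a union of orbits, so it contains the closure of each of its orbits) and it is preserved under direct sums (if $A'_k\to A$ with $A'_k\in\orb(A')$, then $A'_k\oplus B\to A\oplus B$ with $A'_k\oplus B\in\orb(A'\oplus B)$). Hence a chain $M=N_0\rightsquigarrow\cdots\rightsquigarrow N_t=L$ of the six moves yields $M\in\overline\orb(L)$, i.e.\ \ref{m1}--\ref{m3}, as soon as one checks that for each \emph{single} move $N\rightsquigarrow N'$ the small pencil on the left of the arrow lies in the closure of the orbit of the small pencil on the right. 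By Theorem \ref{dehoyos_th} this reduces, for each of the six move types, to a one-line partial-sum verification of \ref{m1}--\ref{m3} for those two pencils: the two sides always have the same size; they have the same rank for moves $1$--$5$ (so $h=0$) and rank differing by one for move $6$ (so $h=1$); and the change in the Weyr data is elementary --- a redistribution inside $r$ (move $1$), inside $\ell$ (move $2$) or inside some $W(\mu,\cdot)$ (move $5$); a transfer of one unit from some $W(\mu,\cdot)$ to $r$ (move $3$) or to $\ell$ (move $4$); or the replacement of a pair of singular blocks by regular blocks with pairwise distinct eigenvalues (move $6$). In each case the relevant weak majorization is immediate, the side conditions $1\le j\le k$, $j,k\ge 0$, $p+q+1=\sum_{i} n_i$ and the $\mu_i$ pairwise distinct being precisely what the computation needs, and the remaining two majorizations hold by reflexivity of $\maj$.

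\emph{The implication ``$\Rightarrow$''.} Assume \ref{m1}--\ref{m3}, equivalently $M\in\overline\orb(L)$. I would argue by induction, the inductive step being: if the KCFs of $M$ and $L$ do not coincide, then there is a single move $M\rightsquigarrow M'$ with, again, $M'\in\overline\orb(L)$. This terminates, because by the ``$\Leftarrow$'' direction just proved each move gives $\overline\orb(M)\subsetneq\overline\orb(M')$ (a proper inclusion, as a move changes the KCF and an orbit closure contains a unique dense orbit), and only finitely many KCFs can appear along the chain --- the sole new eigenvalues being the $\mu_i$ created in the at most $h$ applications of move $6$. To produce the move I would split on $h=\rank L-\rank M$. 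If $h>0$, then by \eqref{nrank} we get $r_0(M)=r_0(L)+h>0$ and $\ell_0(M)=\ell_0(L)+h>0$, so $M$ has at least one right and at least one left singular block, and I would apply a move of type $6$ to a carefully chosen pair $L_p\oplus L_q^\top$, selecting the inserted regular part $\bigoplus_{i} J_{n_i}(\mu_i)$ (its sizes and its distinct eigenvalues) so that \ref{m1}--\ref{m3} survive with $h$ replaced by $h-1$. If $h=0$ but the KCFs still differ, then one of the three majorizations is strict at some index (or for some $\mu$), and I would apply a move of type $1$ or $3$ when the slack is in $r$, of type $2$ or $4$ when it is in $\ell$, and of type $5$ when it is in some $W(\mu,\cdot)$ --- using identities \eqref{m+p}--\eqref{n+q} to decide between ``$1$ and $3$'' and between ``$2$ and $4$'' (that is, whether one unit must be drawn from a Jordan chain or merely redistributed among singular blocks of the same kind) --- each move being built so that the offending Weyr list is pushed exactly one dominance-step towards that of $L$ while the other two majorizations are left intact, which is again a short partial-sum check using transitivity of $\maj$.

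\emph{Expected main obstacle.} Everything delicate lies in ``$\Rightarrow$'', and the hard sub-case is $h>0$: one application of move $6$ must simultaneously lower $h$ by one, preserve \ref{m1}--\ref{m2} for the singular blocks that remain after $L_p$ and $L_q^\top$ are deleted, and preserve \ref{m3} for \emph{every} $\mu\in\overline\CC$, which forces a non-obvious choice of $p,q$ and of the inserted regular pencil. This is precisely the combinatorial core of Pokrzywa's original argument in \cite{Pokr86} and of the treatments in \cite{Bole98,sergeichuk-laa2021}, and I would not expect a shortcut around that bookkeeping; the integer-list Lemmas \ref{disdeltas_lem} and \ref{main_lem}, together with the explicit Weyr-characteristic form of \ref{m1}--\ref{m3}, do however make each individual verification routine.
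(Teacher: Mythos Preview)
The paper does not prove Theorem~\ref{rules_th}: it is quoted as a known result, with references to \cite{Pokr86,Bole98,sergeichuk-laa2021,DmDo18}, and is then used as a black box in the proof of Theorem~\ref{dim_th}. So there is no ``paper's own proof'' to compare your proposal against.

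That said, your outline is in line with how the cited sources argue. The ``$\Leftarrow$'' direction is indeed routine once one observes transitivity and compatibility with direct sums, and reducing it via Theorem~\ref{dehoyos_th} to a partial-sum check for each of the six moves is exactly right. For ``$\Rightarrow$'' your inductive scheme (produce a single admissible move $M\rightsquigarrow M'$ with $M'\in\overline\orb(L)$, iterate) is the standard one, and you correctly isolate the only genuinely nontrivial step: when $h>0$, choosing $p,q$ and the regular pencil $\bigoplus_i J_{n_i}(\mu_i)$ in move~6 so that all three majorizations survive with $h$ replaced by $h-1$. As you note, this is precisely the combinatorial core worked out in \cite{Pokr86} and revisited in \cite{Bole98,sergeichuk-laa2021}; your sketch does not supply that bookkeeping, but it points to the right place and the right references. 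Your termination argument (proper inclusions of orbit closures, at most $h$ uses of move~6, hence a finite eigenvalue set and finitely many possible KCFs) is also sound.
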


Now, we are in a position to prove Theorem \ref{dim_th}.

\medskip

\noindent{\em Proof of Theorem {\rm\ref{dim_th}}.} By \eqref{codimorbit},  the inequality \eqref{codim_ineq} is equivalent to
\begin{equation}\label{dimineq2}
\begin{array}{c}
     \displaystyle\ell_0(M)n+r_0(M)m-\sum_{i\geq0} r_i(M)r_{i+1}(M)-\sum_{i\geq0}\ell_i(M)\ell_{i+1}(M)
     +\sum_{\lambda}\sum_{i\geq1} W_i(\la,M)^2\\
     \geq\displaystyle\ell_0(L)n+r_0(L)m-\sum_{i\geq0} r_i(L)r_{i+1}(L)-\sum_{i\geq0}\ell_i(L)\ell_{i+1}(L)
     +\sum_{\lambda}\sum_{i\geq1} W_i(\la,L)^2.
\end{array}
\end{equation}
Using that $r_0(M)-r_0(L)=\ell_0(M)-\ell_0(L)=h$, 
which follows from \eqref{nrank}, we get
$
\ell_0(M)n+r_0(M)m-\ell_0(L)n-r_0(L)m =h(m+n).
$
Hence, \eqref{dimineq2} (and, so, \eqref{codim_ineq}) is equivalent to
\begin{equation}\label{dimineq3}
\begin{array}{c}
    \displaystyle\sum_{i\geq0} r_i(L)r_{i+1}(L)-\sum_{i\geq0} r_i(M)r_{i+1}(M)+\sum_{i\geq0} \ell_i(L)\ell_{i+1}(L)-\sum_{i\geq0}\ell_i(M)\ell_{i+1}(M)\\\displaystyle
    +\sum_{\la}\sum_{i\geq1} W_i(\la,M)^2-\sum_{\la}\sum_{i\geq1} W_i(\la,L)^2+h(m+n)\geq0.
    \end{array}
\end{equation}

We first prove \eqref{dimineq3} when $h=0$.
Note that, if we set $\alpha_i:=W_i(\la,L)$ and $\beta_i:=W_i(\la,M)$, for $i\geq1$, then the definition of the Weyr characteristic guarantees that $\alpha_i,\beta_i$ satisfy (i) in Lemma \ref{main_lem}, whereas condition \ref{m3} guarantees that (ii) in that lemma is also satisfied. Hence, \eqref{sumsquares} implies
\begin{equation}\label{squares-ineq}
\sum_{\la}\sum_{i\geq1} W_i(\la,L)^2\leq \sum_{\la}\sum_{i\geq1} W_i(\la,M)^2.
\end{equation}
Similarly, if either $\alpha_i:=r_i(M),\beta_i:=r_i(L)$, for $i\geq1$, together with $p:=r_0(L)=r_0(M)$ (since $h=0$), or $\alpha_i:=\ell_i(M),\beta_i:=\ell_i(L)$, for $i\geq1$, together with $p:=\ell_0(L)=\ell_0(M)$ (again because $h=0$), then (M1)-(M2) guarantee that $\alpha_i,\beta_i,p$ satisfy (i)--(ii) in Lemma \ref{main_lem}, so \eqref{ineqproducts} gives
\begin{equation}\label{mineq}
\sum_{i\geq0} r_i(M)r_{i+1}(M)\leq \sum_{i\geq0} r_i(L)r_{i+1}(L)\quad\mbox{and}\quad
\sum_{i\geq0}\ell_i(M)\ell_{i+1}(M)\leq \sum_{i\geq0}\ell_i(L)\ell_{i+1}(L).
\end{equation}
Adding up \eqref{squares-ineq} and \eqref{mineq}  we obtain \eqref{dimineq3} for $h=0$.

For the case $h>0$, we are going to prove \eqref{dimineq3} for $L=N\oplus N_L$ and $M=N\oplus N_M$, with $N_L$ and $N_M$ being the pencils in, respectively, the right-hand side and the left-hand side of each rule 1-6 in the statement of Theorem \ref{rules_th}. Note that $h=0$ in rules 1-5, and we have already proved that \eqref{dimineq3} holds in this case. Hence, it only remains to prove that it holds for rule 6, namely for
$$
M=N\oplus\ L_p\oplus L_q^T\rightsquigarrow L=N\oplus\bigoplus_{i=1}^s J_{n_i}(\la_i),\qquad \sum_{i=1}^s n_i=p+q+1,\qquad \la_i\neq\la_j,\ i\neq j,
$$
for some pencil $N$. Then,
$
r(M)=r(L)+(\underbrace{1,\hdots,1}_{p+1},0, \hdots, 0),\
    \ell(M)=\ell(L)+(\underbrace{1,\hdots,1}_{q+1},0, \hdots, 0),\label{llm}
$
and
\begin{equation}
    W(\la_i,L)=W(\la_i,M)+(\underbrace{1,\hdots,1}_{n_i}, 0, \hdots, 0) \label{wlm},\qquad i=1,\hdots,s,
\end{equation}
so, in particular, $h=r_0(M)-r_0(L)=1$.  Note that \eqref{dimineq3} is equivalent to
\begin{equation}\label{dimineq4}
    \begin{array}{c}
    \displaystyle\sum_{i\geq0} r_i(L)r_{i+1}(L)+\sum_{i\geq0} \ell_i(L)\ell_{i+1}(L)
    +\sum_{\la}\sum_{i\geq1} W_i(\la,M)^2+h(m+n)\\
    \displaystyle\geq
    \sum_{i\geq0} r_i(M)r_{i+1}(M)+\sum_{i\geq0}\ell_i(M)\ell_{i+1}(M)+\sum_{\la}\sum_{i\geq1} W_i(\la,L)^2.
    \end{array}
\end{equation}
Starting from the left-hand side in \eqref{dimineq4}, with $h=1$, we get:\allowbreak
$$\allowbreak
\begin{array}{c}
\displaystyle\sum_{i\geq 0}r_i(L)r_{i+1}(L)+\sum_{i\geq0}\ell_i(L)\ell_{i+1}(L)+\sum_{\la}\sum_{i\geq1}W_i(\la,M)^2+(m+n)\\
=\displaystyle\sum_{i=0}^{p-1}(r_i(M)-1)(r_{i+1}(M)-1)+(r_p(M)-1)r_{p+1}(M)+\sum_{i\geq p+1}r_i(M)r_{i+1}(M)\\
+\displaystyle\sum_{i=0}^{q-1}(\ell_i(M)-1)(\ell_{i+1}(M)-1)+(\ell_q(M)-1)\ell_{q+1}(M)+\sum_{i\geq q+1}\ell_i(M)\ell_{i+1}(M)\\
+\displaystyle\sum_{\la\neq\la_1,\hdots,\la_s}\sum_{i\geq1} W_i(\la,L)^2+\sum_{k=1}^s\sum_{i=1}^{n_k}(W_i(\la_k,L)-1)^2
+\sum_{k=1}^s\sum_{i\geq n_k+1}W_i(\la_k,L)^2+m+n\\
=\displaystyle\sum_{i\geq0}r_i(M)r_{i+1}(M)-\sum_{i=0}^{p-1}(r_i(M)+r_{i+1}(M))+p-r_{p+1}(M)\\
+\displaystyle\sum_{i\geq0}\ell_i(M)\ell_{i+1}(M)-\sum_{i=0}^{q-1}(\ell_i(M)+\ell_{i+1}(M))+q-\ell_{q+1}(M)\\
+\displaystyle\sum_{\la\neq\la_1,\hdots,\la_s}\sum_{i\geq1}W_i(\la,L)^2+\sum_{k=1}^s\sum_{i=1}^{n_k}W_i(\la_k,L)^2-2\sum_{k=1}^s\sum_{i=1}^{n_k}W_i(\la_k,L)\\
+\displaystyle\sum_{k=1}^sn_k+\sum_{k=1}^s\sum_{i\geq n_k+1}W_i(\la_k,L)^2+m+n\\
\end{array}
$$
$$
\begin{array}{c}
=\displaystyle \sum_{i\geq0}r_i(M)r_{i+1}(M)+\sum_{i\geq0}\ell_i(M)\ell_{i+1}(M)+\sum_{\la}\sum_{i\geq1}W_i(\la,L)^2\\-\displaystyle2\sum_{i=1}^{p-1}r_i(M)-r_0(M)-r_p(M)-r_{p+1}(M)+p\\
-\displaystyle2\sum_{i=1}^{q-1}\ell_i(M)-\ell_0(M)-\ell_q(M)-\ell_{q+1}(M)+q
\displaystyle -2\sum_{k=1}^s\sum_{i=1}^{n_k}W_i(\la_k,L)+\sum_{k=1}^sn_k+m+n.
\end{array}
$$
Hence, it remains to show that
\begin{equation}\label{remain}
\begin{array}{c}
\displaystyle m+n+p+q+\sum_{k=1}^sn_k-2\left(\sum_{i=1}^{p-1}r_i(M)+
\sum_{i=1}^{q-1}\ell_i(M)+ \sum_{k=1}^s\sum_{i=1}^{n_k}W_i(\la_k,L)\right)\\
-r_0(M)-r_p(M)-r_{p+1}(M)-\ell_0(M)-\ell_q(M)-\ell_{q+1}(M)\geq0.
\end{array}
\end{equation}
Adding up \eqref{m+p} and \eqref{n+q}, we get
\begin{equation}\label{m+n}
m+n=2\left(\sum_{i\geq1}r_i(M)+\sum_{i\geq1}\ell_i(M)+\sum_\la\sum_{i\geq1}W_i(\la,M)\right)+r_0(M)+\ell_0(M).
\end{equation}
From \eqref{wlm} 
we get the following identity:
\begin{equation}\label{regularpart}
   \sum_\la\sum_{i\geq1}W_i(\la,M)+\sum_{k=1}^sn_k= \sum_{\la}\sum_{i\geq1}W_i(\la,L)\geq \sum_{k=1}^s\sum_{i=1}^{n_k}W_i(\la_k,L).
\end{equation}
From \eqref{m+n} and the identity $\sum_{k=1}^sn_k=p+q+1$, the left-hand side of \eqref{remain} is equal to
\begin{equation}\label{geq1}
\begin{array}{c}
\displaystyle2\left(\sum_{k=1}^sn_k+\sum_{\la}\sum_{i\geq1}W_i(\la,M)-\sum_{k=1}^s\sum_{i=1}^{n_k}W_i(\la_k,L)+\sum_{i\geq p+2}r_i(M)+\sum_{i\geq q+2}\ell_i(M)\right)\\
+r_p(M)+r_{p+1}(M)+\ell_q(M)+\ell_{q+1}(M)-1.
\end{array}
\end{equation}
By \eqref{regularpart} and $r_p(M)\geq1$, $\ell_q(M)\geq1$, we conclude that \eqref{geq1} is at least $1$, so \eqref{remain} follows.

Now, let us prove the claim in the statement regarding the equality in \eqref{codim_ineq}. First, we note that $h=0$ for rules \ref{rule1}--\ref{rule5} in the statement of Theorem \ref{rules_th}, so the equality in \eqref{codim_ineq} implies that the equality is attained in \eqref{dimineq3} (for $h=0$), and then it is also attained in \eqref{squares-ineq} and \eqref{mineq}. By Lemma \ref{main_lem}, this implies $W(\la,L)=W(\la,M)$, for all $\la\in\overline\CC$, together with $r(L)=r(M)$ and $\ell(L)=\ell(M)$.

 For rule 6 in Theorem \ref{rules_th}, we have proved that the expression in \eqref{geq1} is at least $1$, which implies that $\codim\orb(M)-\codim\orb(L)\geq1$ each time that rule 6 is applied. Thus, if the equality in \eqref{codim_ineq} is attained, only rules \ref{rule1}--\ref{rule5} are allowed in going from $M$ to $L$, and in this case $h=0$ and the identity holds in the majorizations \ref{m1}--\ref{m3} in the statement.
$\square$

We want to emphasize that setting $h=0$ and replacing ``$\maj$" by ``$=$" in \ref{m1}--\ref{m3} in the statement of Theorem \ref{dim_th} is equivalent to say that $M\in\orb(L)$.


An alternative approach to prove Theorem \ref{dim_th} is by using the formulas for the codimension in terms of the Segre characteristic in \cite{DeEd95}. Following a similar approach to the one in this note, this would require to analyze independently all six changes of the eigenstructure described in Theorem \ref{rules_th}, and to prove that in every single change the codimension inequality is satisfied.

\bigskip

\noindent{\bf Acknowledgments}.  The authors thank Andrii Dmytryshyn for suggesting the use of Theorem \ref{rules_th} to prove the main result. This work is part of grant PID2023-147366NB-I00 funded by MICIU/AEI/ 10.13039/501100011033 and FEDER/UE. Also funded by RED2022-134176-T and by the program Excellence Initiative -
Research University at the Jagiellonian University in Kraków.

\bibliographystyle{elsarticle-num}

\end{document}